\documentclass[11pt,reqno]{amsart}
\usepackage[comma]{natbib}
\usepackage[a4paper,left=1in,right=1in,top=1.25in,bottom=1.25in]{geometry}
\usepackage{color}
\usepackage{graphicx}
\usepackage{amsmath}
\usepackage{amssymb}
\usepackage{enumerate}
\usepackage{comment}

\newtheorem{theorem}{Theorem}%[section]
\newtheorem{corollary}[theorem]{Corollary}

\theoremstyle{definition}
\newtheorem{definition}{Definition}
\theoremstyle{remark}

\theoremstyle{remark}
\newtheorem{example}{Example}

\begin{document}

\title[]{Lorenz and convex ordering of parasite burden distributions with density-dependent deaths}
\author[]{Madison Carlton}
\author[]{Ross McVinish}
\address{School of Mathematics and Physics, University of Queensland}
\email{r.mcvinish@uq.edu.au}

\subjclass[2020]{}

\keywords{Aggregation, Bessel distribution, Conway-Maxwell-Poisson distribution, Gini index, Pietra index}

\begin{abstract}
We establish a convex ordering result for count distributions arising from a parasite acquisition model with density-dependent deaths. If the ratio of two death-rate sequences is non-increasing, then the corresponding mean-parametrised distributions are ordered in the convex order. This yields, as a special case, the convex ordering of the mean-parametrised Conway-Maxwell-Poisson distribution in its dispersion parameter.
\end{abstract}

\maketitle

\section{Introduction}

We consider the following model of parasite acquisition where parasites are subject to density dependent deaths, previously studied by \citet{AG:82} and \citet{BP:00}, among others. At birth the host is parasite free. During its lifetime, new infections of the host occur as a homogeneous Poisson process with rate $\phi$. Acquired parasites die at a per capita mortality rate $\sigma(m)$, where $m$ is the number of parasites present in the host. We shall assume that parasite deaths exhibit positive density dependence, so 
\begin{equation}
0 < \sigma(m) \leq \sigma(m+1) < \infty,     \label{eq:parasite-death}
\end{equation}
for all $m \geq 1$. The host dies at a rate $\delta$, which does not depend on its parasite burden. 

Conditional on host survival, the distribution of the host's parasite burden converges as host age tends to infinity and the limiting probability mass function is given by
\begin{equation}
    \pi_m(\phi, \sigma) = \frac{1}{Z(\phi, \sigma)} \frac{\phi^m}{m! \prod_{k=1}^m \sigma(k)}, \quad m \in \{0,1,2,\ldots\}, \label{eq:stat-dist}
\end{equation}
where $Z(\phi,\sigma)$ is the normalising constant and the empty product is interpreted as one \citep{BP:00}. Two natural forms for $\sigma$ lead to known distributions.

\begin{example}
    Suppose $\sigma(m) = m^{\nu-1}$, with $\nu \geq 1$. Then
    \begin{equation*}
        \pi_m(\phi, \sigma) = \frac{1}{Z(\phi, \sigma)} \frac{\phi^m}{(m!)^\nu}.
    \end{equation*}
    This is the Conway-Maxwell-Poisson distribution \citep{CM:1961}. In general, any $\nu\geq 0$ is permissible. While much of the research on this distribution has focused on statistical modelling applications, \citet{DG:2016} and \citet{GX:2022} have examined some of its theoretical properties.
\end{example}
    
\begin{example}
    Suppose $\sigma(m) = \nu + m$, with $\nu > -1$. Then
    \begin{equation*}
        \pi_m(\phi, \sigma) = \frac{1}{Z(\phi,\sigma)} \frac{\phi^m}{m! \prod_{k=1}^m (k + \nu)} = \frac{1}{I_{\nu}(2\sqrt{\phi})} \frac{\phi^{m+\nu/2}}{m! \Gamma(m + \nu +1)},
    \end{equation*}
    where $I_\nu(\cdot)$ denotes the modified Bessel function of the first kind. This distribution is the discrete Bessel distribution introduced in \citet{YK:2000}, where in their notation $a = 2\sqrt{\phi}$. 
\end{example}    

\citet[Theorem 3]{BP:00} proved that the distribution (\ref{eq:stat-dist}) is under-dispersed, that is the variance-to-mean ratio is less than one for any non-constant $\sigma$ satisfying (\ref{eq:parasite-death}). While the variance-to-mean ratio has been a standard measure of aggregation in parasitology, \citet{Poulin:1993} has argued that having measures of aggregation based on the Lorenz curve is closer to what parasitologists intuitively think of when talking about aggregation. The Lorenz curve \citep{Lorenz:1905} was proposed as a graphical measure of wealth inequality. The following definition is due to \citet{Gastwirth:1971}.

\begin{definition}
    The {\em Lorenz curve} $L: [0,1] \to [0,1] $ for the distribution $F$ on $[0,\infty)$ with finite mean $ \mu$ is given by
    \begin{equation*}
        L(u) = \frac{\int_{0}^{u} F^{-1}(y)\, dy}{\mu}, 
    \end{equation*}
    where $ F^{-1}$ is the quantile function
    \begin{equation*}
        F^{-1}(y) = \inf \{ x : F(x) \geq y \} \quad \text{for } y \in (0,1).
    \end{equation*}
\end{definition}

When all hosts are infected with the same number of parasites, the Lorenz curve is given by $L(u) = u$ and is called the egalitarian line. Inequality in a distribution can be judged by how far its Lorenz curve departs from the egalitarian line. In this way, the Lorenz curve induces a partial ordering on the set of all distributions on $[0,\infty)$ with finite mean \citep[Definition 3.2.1]{Arnold:87}.

\begin{definition}
    Let $ X$ and $Y$ be random variables with the respective Lorenz curves denoted $ L_{X} $ and $ L_{Y}$. We say $ X$ is smaller than $Y$ in the {\em Lorenz order}, denoted $ X \leq_{\rm{Lorenz}} Y $ if $ L_{X}(u) \geq L_{Y}(u)$ for every $ u \in [0,1]$.
\end{definition} 

We do not work with the Lorenz curve directly, instead we exploit a connection with the convex order of distributions.

\begin{definition}
    For two random variables $X$ and $Y$ we say $X$ is smaller than $Y$ in the {\em convex order}, denoted $X \leq_{\rm cx} Y$, if $ \mathbb{E} \left[\psi(X)\right] \leq \mathbb{E} \left[\psi(Y)\right]$ for all convex functions $ \psi: \mathbb{R} \to \mathbb{R}$, provided the expectations exist. 
\end{definition}

The Lorenz order and convex order are related by the following:
\begin{align*}
X &\leq_{\rm Lorenz} Y    & \text{is equivalent to}&  &\frac{X}{\mathbb{E} \left[X\right]} &\leq_{\rm cx} \frac{Y}{\mathbb{E} \left[Y\right]}. 
\end{align*}
See \citet[Corollary 3.2.1]{Arnold:87} or \citet[Equation 3.A.33]{SS:07}. So when the two distributions being compared have the same mean, the Lorenz order reduces to the convex order. 

\section{Main result}

\subsection{Mean parametrisation} 

Although the distribution (\ref{eq:stat-dist}) is naturally parameterised by the infection rate $\phi$ and the death-rate sequence $\sigma$, it will be convenient to re-parametrise the distribution in terms of its mean $\mu$ and $\sigma$. Mean parametrisation are useful in statistical modelling applications \citep{Huang:2017}. Here, we use this parametrisation to disentangle the effect of the parasite death rates on mean abundance from their effect on aggregation.

Let $\mu(\phi,\sigma)$ denote the mean of the distribution (\ref{eq:stat-dist}). For the mean parametrisation to be well defined, we require that, for each death-rate sequence $\sigma$, the map
\begin{equation}
    \phi \mapsto \mu(\phi,\sigma) \label{eq:mean-map}
\end{equation}
is a one-to-one map onto $(0,\infty)$. In that case, for each mean $\mu \in (0,\infty)$ there is a unique infection rate $\phi(\mu,\sigma)$. For fixed $\sigma$, the distribution (\ref{eq:stat-dist}) is a power series distribution, and consequently $\mu(\phi,\sigma)$ is a strictly increasing function of $\phi$ \citep[Equations (2.4) and (2.5)]{JKK:05}. Hence the map (\ref{eq:mean-map}) is one-to-one. To show that (\ref{eq:mean-map}) is onto $(0,\infty)$ we need $\lim_{\phi\to 0} \mu(\phi, \sigma) = 0$ and $\lim_{\phi\to\infty}\mu(\phi,\sigma) = \infty$. As $Z(\phi,\sigma) \geq 1$ for any $\phi>0$ and $\sigma$, 
\[
\mu(\phi, \sigma) = \frac{1}{Z(\phi,\sigma)} \sum_{m=0}^\infty \frac{m\phi^m}{m! \prod_{k=1}^m \sigma(k)} \leq \phi \exp(\phi/\sigma(1))
\]
so $\lim_{\phi \to 0} \mu(\phi, \sigma) = 0$. To show $\lim_{\phi \to \infty} \mu(\phi,\sigma) = \infty$, let $n$ be the largest integer such that $n \sigma(n) < \phi/2$. Then
\begin{equation}
\frac{\sum_{k=0}^n \pi_k(\phi,\sigma)}{\pi_n(\phi,\sigma)} \leq \sum_{k=0}^n \left(\frac{n! \phi^k \prod_{j=1}^n \sigma(j)}{k! \phi^n \prod_{j=1}^k \sigma(j)} \right) \leq \sum_{k=0}^n \left(\frac{n\sigma(n)}{\phi} \right)^{n-k}  \leq 2. \label{eq:prob_bound}
\end{equation}
If $\pi_n(\phi,\sigma) > 1/3$, then $\mu(\phi,\sigma) > n/3$. On the other hand, if $\pi_n(\phi,\sigma) \leq 1/3$, then (\ref{eq:prob_bound}) implies 
\[
1 = \sum_{k=0}^\infty  \pi_k(\phi,\sigma) \leq 2\pi_n(\phi,\sigma) + \sum_{k=n+1}^\infty \pi_k(\phi,\sigma) \leq \frac{2}{3} + \sum_{k=n+1}^\infty \pi_k(\phi,\sigma)
\]
so $\mu(\phi,\sigma) > n/3$. Since $m\sigma(m)$ is increasing in $m$, it follows that $n\to\infty$ as $\phi\to\infty$ and $\lim_{\phi \to \infty} \mu(\phi,\sigma) = \infty$. This establishes the map (\ref{eq:mean-map}) is onto $(0, \infty)$ and the mean parameterisation is well defined. In the following we write $\pi(\mu,\sigma)$ for the distribution $\pi(\phi(\mu,\sigma), \sigma)$.

\subsection{Convex ordering in $\sigma$}

\begin{theorem} \label{thm:cx_order}
   If $\sigma_2(m)/\sigma_1(m)$ is non-increasing in $m$, then $\pi(\mu,\sigma_1) \leq_{\rm cx} \pi(\mu, \sigma_2)$.
\end{theorem}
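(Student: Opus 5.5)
The plan is to use the classical sufficient condition for the convex order between two distributions with equal means: the Karlin--Novikoff cut criterion, in its discrete form.

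\emph{Cut criterion.} If $X$ and $Y$ have the same finite mean, and the difference of their probability mass functions $p_Y(m)-p_X(m)$ changes sign exactly twice, in the pattern $+,-,+$, then $X \leq_{\rm cx} Y$. Equivalently, the two distribution functions cross exactly once (see \citet[Theorem 3.A.44]{SS:07}).

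Let $X\sim\pi(\mu,\sigma_1)$ and $Y\sim\pi(\mu,\sigma_2)$, and write $\phi_i=\phi(\mu,\sigma_i)$. Both have mean $\mu$ by construction. Consider the ratio of the mass functions:
\[
r(m)=\frac{\pi_m(\phi_2,\sigma_2)}{\pi_m(\phi_1,\sigma_1)}=\frac{Z(\phi_1,\sigma_1)}{Z(\phi_2,\sigma_2)}\left(\frac{\phi_2}{\phi_1}\right)^m\prod_{k=1}^m\frac{\sigma_1(k)}{\sigma_2(k)}.
\]
Then
\[
\log r(m)=c+m\log(\phi_2/\phi_1)+\sum_{k=1}^m \log\bigl(\sigma_1(k)/\sigma_2(k)\bigr).
\]
The increments of $\log r$ are
\[
\log r(m)-\log r(m-1)=\log(\phi_2/\phi_1)+\log\bigl(\sigma_1(m)/\sigma_2(m)\bigr).
\]
Since $\sigma_2/\sigma_1$ is non-increasing, these increments are non-decreasing in $m$. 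Hence $\log r$ is a convex sequence on $\{0,1,2,\dots\}$.

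Next I determine the sign pattern of $r(m)-1$. A convex sequence can cross the level $0$ at most twice, so the set $\{m: r(m)<1\}$ is an interval of consecutive integers. Consequently $r-1$ has sign pattern $+,-,+$, possibly degenerate.

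The degenerate cases are excluded using the equal means and total mass one.
\begin{itemize}
\item If $r\geq 1$ everywhere, total mass forces $r\equiv 1$, and the distributions coincide.
\item If $r-1$ has only one sign change, say from $-$ to $+$, then $Y$ dominates $X$ in the likelihood ratio order. This gives strict stochastic dominance, so $\mathbb{E}Y>\mathbb{E}X$ unless the laws are equal. That contradicts the equal means.
\item A single change from $+$ to $-$ is excluded in the same way.
\end{itemize}
Hence either $\pi(\mu,\sigma_1)=\pi(\mu,\sigma_2)$, or $p_Y-p_X$ has exactly the pattern $+,-,+$. In both cases the cut criterion gives $X\leq_{\rm cx} Y$.

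Two technical points remain.

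First, the cut criterion for two sign changes needs a short argument in the discrete infinite-support setting. I would reproduce it directly:
\begin{itemize}
\item Set $D(x)=F_Y(x)-F_X(x)$. The sign pattern of $p_Y-p_X$ implies that $D$ is first $\geq 0$ and then $\leq 0$, i.e.\ the distribution functions cross once.
\item Equal means give $\int_{-\infty}^{\infty} D=0$.
\item It follows that $\int_{-\infty}^{x} D\geq 0$ for all $x$, which is equivalent to $\mathbb{E}(Y-x)_+\geq \mathbb{E}(X-x)_+$ for all $x$.
\item Together with the equal means, this is the convex order.
\end{itemize}
Finiteness of the means is guaranteed by the mean parametrisation.

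Second, I need the power series tails to be well behaved, so that the interchanges in these integrals are justified. This holds because $\sigma$ is bounded below and $Z$ is finite.

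I expect the main obstacle to be the careful treatment of the boundary cases. This means ruling out zero or one crossings via the mean constraint, and making the single-crossing-of-distribution-functions step rigorous when $r(m)=1$ at some points or when the convexity of $\log r$ is not strict.
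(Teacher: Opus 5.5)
Your proposal is correct and follows the paper's proof essentially step for step. The pmf ratio is log-convex, hence V-shaped, which gives the sign pattern $+,-,+$. Fewer sign changes are ruled out because a single crossing forces stochastic ordering, and stochastic ordering with equal means forces equality. The cut criterion (Shaked--Shanthikumar, Theorem 3.A.44) then finishes the argument; you additionally prove that criterion directly, and your proof of it is sound.

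One small correction: a single $-,+$ sign change of a log-convex ratio does not give the likelihood ratio order, since the ratio can keep decreasing below $1$ before it rises. However, a single $-,+$ sign change of $p_Y-p_X$ directly gives $F_Y\le F_X$, i.e. $X\le_{\rm st} Y$, which is the route the paper takes and all your argument needs.
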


\begin{proof}
    If $\sigma_2(m)/\sigma_1(m)$ is constant in $m$, then $\pi(\mu,\sigma_1) \stackrel{d}{=} \pi(\mu, \sigma_2)$ and the result holds trivially. Now suppose that  $\sigma_2(m)/\sigma_1(m)$ is not constant in $m$ so $\pi(\mu,\sigma_1)$ and $\pi(\mu, \sigma_2)$ are not identical distributions.  From \citet[Theorem 3.A.44]{SS:07}, $\pi(\mu,\sigma_1) \leq_{\rm cx} \pi(\mu, \sigma_2)$, if the difference in the pmfs $\pi_m(\mu, \sigma_2) - \pi_m(\mu,\sigma_1),\ m \geq 0$ has two sign changes with sign sequence $+, -, +$. Equivalently,  $\pi(\mu,\sigma_1) \leq_{\rm cx} \pi(\mu, \sigma_2)$, if
    \begin{equation}
    \frac{\pi_m(\mu, \sigma_2)}{\pi_m(\mu,\sigma_1)} - 1, \quad m \geq 0, \label{eq1:thm:cx1}
    \end{equation}
    has two sign changes with sign sequence $+,-,+$. The sequence (\ref{eq1:thm:cx1}) must have at least one sign change since $\pi_m(\mu,\sigma_1)$ and $\pi(\mu, \sigma_2)$ are pmfs. If there were only one sign change, then either $\pi(\mu,\sigma_1) \leq_{\rm st} \pi(\mu, \sigma_2)$ or $\pi(\mu, \sigma_2) \leq_{\rm st} \pi(\mu,\sigma_1)$, where $\leq_{\rm st}$ denotes the usual stochastic ordering, \citep[Theorem 1.A.12]{SS:07}. However, since the two distributions have the same mean, the usual stochastic ordering would imply the distributions were equal \citep[Theorem 1.A.8]{SS:07}. As the distributions are not equal, we must have at least two sign changes. 
    
    Now suppose $\sigma_2(m)/\sigma_1(m)$ is non-increasing in $m$ and not constant so 
    \begin{equation*}
    \frac{\sigma_2(m)}{\sigma_1(m)}    \frac{\sigma_1(m+1)}{\sigma_2(m+1)} \geq 1 
    \end{equation*}
    for all $m \geq 1$ and the inequality is strict for at least one $m$. Recall that a sequence $\{a_i\}_{i=0}^\infty$ is log-convex if $a_i^2 \leq a_{i-1} a_{i+1}$ for all $i\geq 1$. The sequence
    \[
    \frac{\pi_m(\mu, \sigma_2)}{\pi_m(\mu,\sigma_1)} = \frac{\pi_0(\mu, \sigma_2)}{\pi_0(\mu,\sigma_1)} \left(\frac{\phi(\mu,\sigma_2)}{\phi(\mu,\sigma_1)} \right)^m \prod_{k=1}^m \frac{\sigma_1(k)}{\sigma_2(k)}
    \]
    is log-convex since
    \begin{align*}
    \frac{\pi_{m-1}(\mu,\sigma_2)}{\pi_{m-1}(\mu,\sigma_1)} \frac{\pi_{m+1}(\mu, \sigma_2)}{\pi_{m+1}(\mu,\sigma_1)} & =   \left(\frac{\pi_0(\mu, \sigma_2)}{\pi_0(\mu,\sigma_1)}\right)^2 \left(\frac{\phi(\mu,\sigma_2)}{\phi(\mu,\sigma_1)} \right)^{2m} \left( \prod_{k=1}^{m-1} \frac{\sigma_1(k)}{\sigma_2(k)} \right)^2 \frac{\sigma_1(m)}{\sigma_2(m)} \frac{\sigma_1(m+1)}{\sigma_2(m+1)}\\
    & = \left(\frac{\pi_m(\mu, \sigma_2)}{\pi_m(\mu,\sigma_1)}\right)^2 \frac{\sigma_2(m)}{\sigma_1(m)}    \frac{\sigma_1(m+1)}{\sigma_2(m+1)} \geq \left(\frac{\pi_m(\mu,\sigma_2)}{\pi_m(\mu, \sigma_1)}\right)^2.
    \end{align*}
    A positive log-convex sequence is V-shaped, that is, it decreases to a minimum and is then increasing. 
    Hence, the sequence (\ref{eq1:thm:cx1}) has two sign changes and the sign sequence is $+,-,+$. 
\end{proof}

Taking $\sigma_2$ to be constant in Theorem \ref{thm:cx_order} immediately yields the following corollary.

\begin{corollary} \label{cor:cx_order}
    If $\sigma(m)$ is non-decreasing in $m$, then $\pi(\mu,\sigma) \leq_{\rm cx} \mathsf{Poisson}(\mu)$.
\end{corollary}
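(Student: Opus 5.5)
The plan is to apply Theorem~\ref{thm:cx_order} with $\sigma_1 = \sigma$ and $\sigma_2 \equiv c$ for a constant $c>0$, say $c = 1$. The one point needing care is that the Poisson distribution must appear as $\pi(\mu,\sigma_2)$ for this constant sequence. With $\sigma_2(m) = c$, the pmf (\ref{eq:stat-dist}) becomes
\[
\pi_m(\phi,\sigma_2) \propto \frac{(\phi/c)^m}{m!},
\]
which is the Poisson distribution with mean $\phi/c$. Hence $\phi(\mu,\sigma_2) = c\mu$ and $\pi(\mu,\sigma_2) = \mathsf{Poisson}(\mu)$. The constant sequence satisfies (\ref{eq:parasite-death}) with equality, so the mean parametrisation constructed above applies to it as well.

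Next I check the hypothesis of Theorem~\ref{thm:cx_order}. The ratio is
\[
\frac{\sigma_2(m)}{\sigma_1(m)} = \frac{c}{\sigma(m)},
\]
and this is non-increasing in $m$ exactly when $\sigma(m)$ is non-decreasing, which is the assumption of the corollary. All values are positive and finite by (\ref{eq:parasite-death}), so the ratio is well defined.

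Theorem~\ref{thm:cx_order} then gives $\pi(\mu,\sigma) \leq_{\rm cx} \pi(\mu,\sigma_2) = \mathsf{Poisson}(\mu)$. The degenerate case where $\sigma$ is constant is already covered inside the theorem, where the two distributions are equal.

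There is no real obstacle. The only thing to verify is that the constant sequence is admissible and that its mean parametrisation really is $\mathsf{Poisson}(\mu)$, both shown above. As a remark, because the means agree, the same argument also yields the Lorenz ordering $\pi(\mu,\sigma) \leq_{\rm Lorenz} \mathsf{Poisson}(\mu)$.
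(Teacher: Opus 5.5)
Your proposal is correct and matches the paper's route: the paper obtains the corollary by taking $\sigma_2$ constant in Theorem~\ref{thm:cx_order}. Your extra checks, that $\pi(\mu,c)$ is exactly $\mathsf{Poisson}(\mu)$ (with $\phi(\mu,c)=c\mu$) and that $c/\sigma(m)$ is non-increasing, are the details the paper leaves implicit.
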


\begin{example}[Conway-Maxwell-Poisson distribution]
    Take $\sigma_1(m) = m^{\nu_1-1}$ and $\sigma_2(m) = m^{\nu_2-1}$ with $\nu_2<\nu_1$. Then 
    \[
        \frac{\sigma_2(m)}{\sigma_1(m)} = m^{\nu_2 - \nu_1}
    \]
    is decreasing in $m$. Writing $\mathsf{CMP}_\mu(\nu)$ for the mean parameterised Conway-Maxwell-Poisson distribution \citep{Huang:2017}, Theorem \ref{thm:cx_order} implies $\mathsf{CMP}_\mu(\nu)$ is decreasing in the convex order as $\nu$ increases and $\mu$ is fixed.
    %$\mathsf{CMP}_\mu(\nu_1) \leq_{\rm cx} \mathsf{CMP}_\mu(\nu_2)$. 
    As the CMP distribution reduces to the Poisson distribution when $\nu=1$, this strengthens the result of \citet[Proposition 2.12 (iii)]{DG:2016} who showed $\mathsf{CMP}_\mu(\nu) \leq_{\rm cx} \mathsf{Poisson}(\mu)$ for $\nu \geq 1$. 
\end{example}

\begin{example}[Bessel distribution]
    Take $\sigma_1(m) = \nu_1 + m$ and $\sigma_2(m) = \nu_2 + m$ with $-1<\nu_1 < \nu_2$. Then
    \[
    \frac{\sigma_2(m)}{\sigma_1(m)} = 1 + \frac{\nu_2 -\nu_1}{\nu_1+m}
    \]
    is decreasing in $m$. Writing $\mathsf{Bessel}_\mu(\nu)$ for the mean parameterised discrete Bessel distribution, Theorem \ref{thm:cx_order} implies $\mathsf{Bessel}_\mu(\nu)$ is increasing in the convex order as $\nu$ increases and $\mu$ is fixed.
    %$\mathsf{Bessel}_\mu(\nu_1) \leq_{\rm cx} \mathsf{Bessel}_\mu(\nu_2)$. 
    Corollary \ref{cor:cx_order} shows $\mathsf{Bessel}_\mu(\nu) \leq_{\rm cx} \mathsf{Poisson}(\mu)$ for all $\nu >-1$. 
\end{example}

\subsection{Role of infection rate and mean burden} We now ask whether, for a fixed death-rate sequence $\sigma$, the distributions $\pi(\phi,\sigma)$ can be ordered in the Lorenz sense as the infection rate $\phi$ varies. Since the map \eqref{eq:mean-map} is strictly increasing in $\phi$, the same question may be expressed in terms of the mean burden $\mu(\phi,\sigma)$. 

Several indices are monotone with respect to the Lorenz order, meaning if $X \leq_{\rm Lorenz} Y$, then the index $I$ satisfies $I(X) \leq I(Y)$. If we evaluate these indices  for $\pi(\phi,\sigma)$, then failure of any of them to be monotone in $\phi$ for fixed $\sigma$ would rule out the Lorenz ordering. One such index is the probability of zero parasite burden, $\mathbb{P}(X=0)$, which for $\pi(\phi,\sigma)$ is $Z(\phi,\sigma)^{-1}$. Since the normalising constant $Z(\phi,\sigma)$ is increasing in $\phi$, it follows that $\mathbb{P}(X=0)$ decreases as $\phi$ increases. Consequently, if the family $\{\pi(\phi,\sigma):\phi>0\}$, with $\sigma$ fixed, can be ordered in the Lorenz sense, then that ordering must be decreasing in $\phi$.

Two other indices that are monotone with respect to the Lorenz order are the Gini index \citep{Gini:2005} and Pietra index \citep{Pietra:1915}. The Gini index is defined as twice the area between the egalitarian line and the Lorenz curve.  \citet{Poulin:1993} has advocated the use of the Gini index in parasitology and it has become one of the standard measures of aggregation in that field. The Pietra index is defined as the maximal vertical deviation between the Lorenz curve and the egalitarian line. Alternative formulas for evaluating these indices are described in \citet[Sections 5.3.1. and 5.3.5]{Arnold:87}. 

Figure \ref{fig:CMP_contours_phi} shows contour plots of the Gini index and Pietra index calculated for $\sigma(m) = m^{\nu-1}$, which corresponds to the Conway-Maxwell-Poisson distribution. We see that both indices are non-monotone in $\phi$ for fixed $\nu$ sufficiently large, and non-monotone in $\nu$ for fixed $\phi$. In fact, the Pietra index appears to be increasing in $\nu$ for fixed $\phi$ sufficiently small.

A better understanding of the behaviour of these indices in this case (that is, the Conway-Maxwell-Poisson distribution), can be obtained from contour plots of the Gini index and Pietra index calculated using the mean parametrisation (Figure \ref{fig:CMP_contours_mu}). We see that these indices are decreasing in $\nu$ for fixed mean, as expected from Theorem \ref{thm:cx_order}, but they are not monotone decreasing in $\mu$ for sufficiently large $\nu$. This is particularly evident for the Pietra index, which takes smaller values when $\mu$ is close to an integer. This behaviour can be explained by a result of \citet{Huang:2023}.  As $\nu\to\infty$, $\pi(\mu,\sigma)$ converges to a distribution supported on $\lfloor \mu \rfloor$ and $\lceil \mu \rceil$ when $\mu$ is not an integer, and to the degenerate distribution at $\mu$ when $\mu$ is an integer. Therefore, the Gini and Pietra indices will be close to 0 for $\mu$ integer and $\nu$ sufficiently large, but bound away from 0 for non-integer $\mu$.

Figure \ref{fig:Bessel_contours} shows contour plots of the Gini index and Pietra index calculated for $\sigma(m) = \nu + m$, which corresponds to the mean parametrised Bessel distribution. We see that these indices are increasing in $\nu$ for fixed mean, as expected from Theorem \ref{thm:cx_order}, though the change is very slow particularly for means less than one. While less obvious than for the Conway-Maxwell-Poisson distribution, the Pietra index appears to be non-monotone in $\mu$ for $(\nu, \mu)$ near $(0,2)$ and  near $(0.5, 1)$.

\begin{figure}[h]
    \centering
    \includegraphics[width=\linewidth]{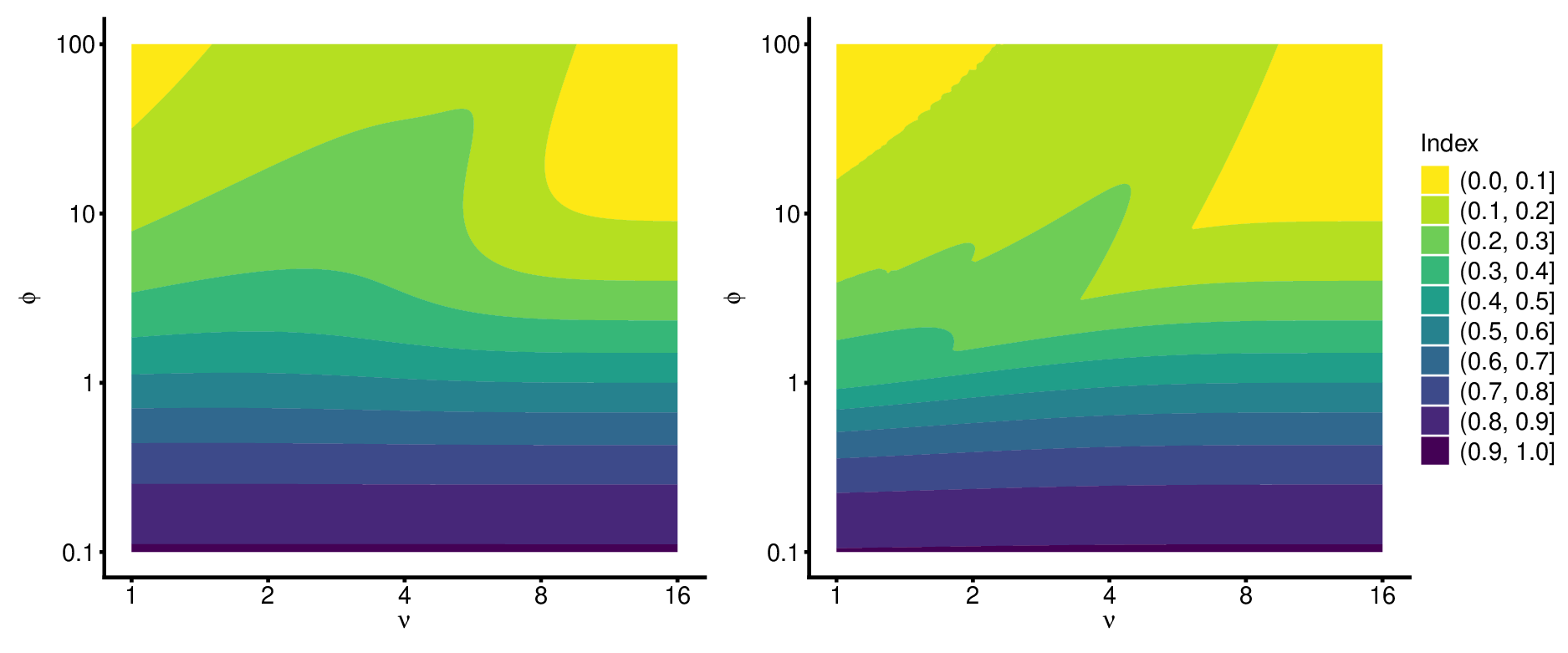}
    \caption{Contour plots of the Gini index (left) and Pietra index (right) for the Conway-Maxwell-Poisson distribution as functions of the infection rate, $\phi$ and mortality parameter $\nu$. Both axes are displayed on a logarithmic scale.}
    \label{fig:CMP_contours_phi}
\end{figure}

\begin{figure}[h]
    \centering
    \includegraphics[width=\linewidth]{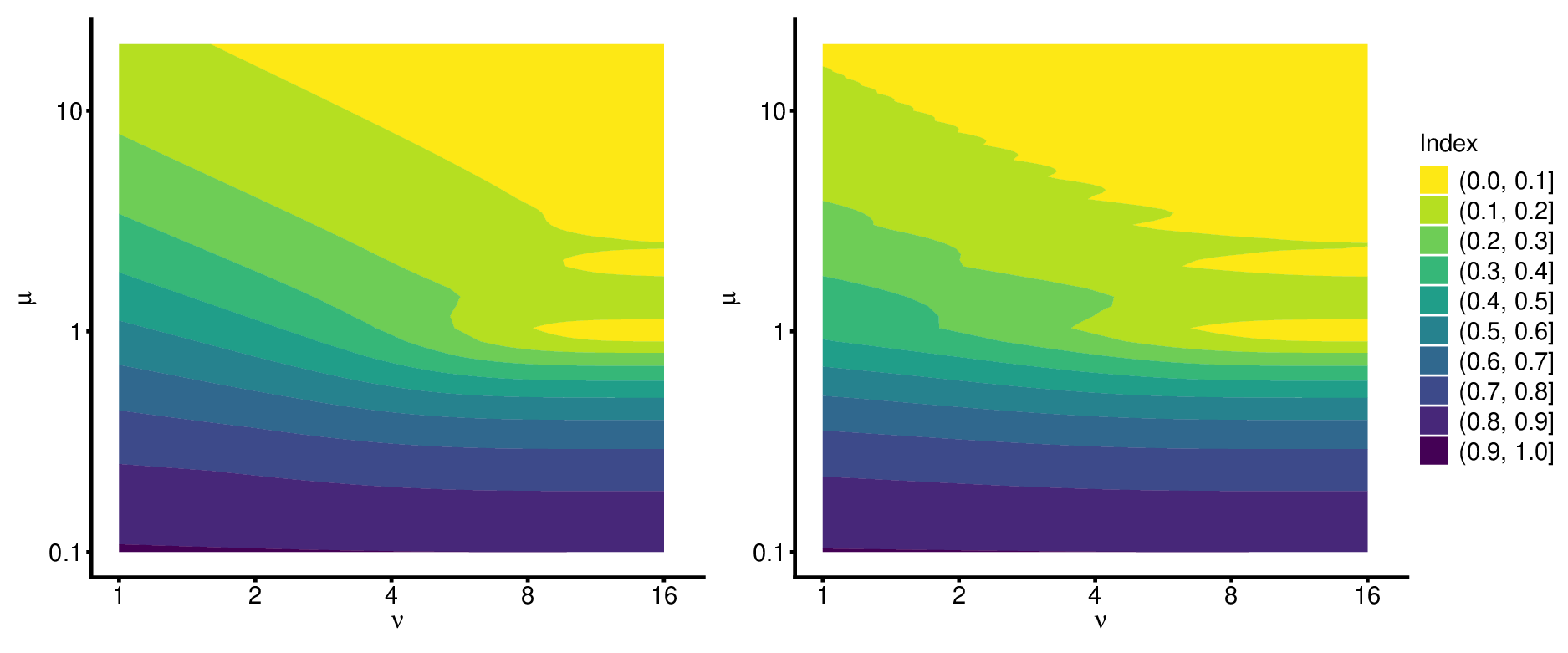}
    \caption{Contour plots of the Gini index (left) and Pietra index (right) for the Conway-Maxwell-Poisson distribution as functions of the infection rate, $\phi$ and mortality parameter $\nu$. Both axes are displayed on a logarithmic scale.}
    \label{fig:CMP_contours_mu}
\end{figure}

\begin{figure}[h]
    \centering
    \includegraphics[width=\linewidth]{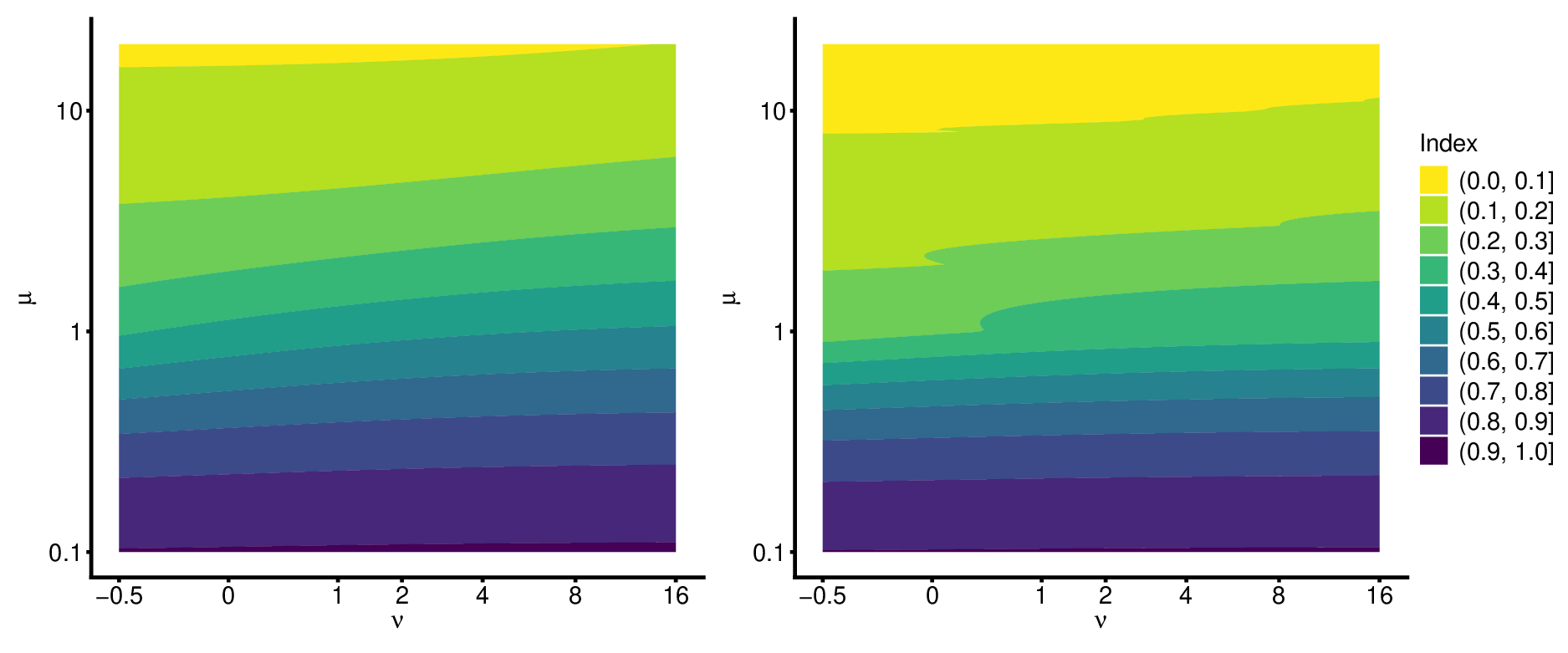}
    \caption{Contour plots of the Gini index (left) and Pietra index (right) for the Bessel Distribution as functions of $\mu$ and $\nu$. The vertical axis is displayed on the logarithmic scale and the horizontal axis is scaled as $\log(1+\nu)$.}
    \label{fig:Bessel_contours}
\end{figure}

\section{Concluding remarks}

We have shown that, for a broad class of density-dependent parasite death rates, the associated mean-parametrised parasite burden distributions are ordered in the convex order. This provides a Lorenz-order interpretation of the effect of density-dependent parasite mortality: stronger density dependence reduces aggregation when mean burden is fixed. The result applies to an idealised model in which infections occur singly, parasites do not reproduce within the host, and host mortality is independent of parasite burden. Extending the analysis to models incorporating these additional mechanisms is a natural topic for future work.

%\bigskip
%\noindent {\bf Acknowledgements:} The author express his thanks to the two reviewers for their thoughtful comments on the original version of the paper.

\bigskip
\noindent {\bf Funding information:} This research was supported by the UQ Winter Research Program.

\bigskip
\noindent {\bf Competing interests:} There were no competing interests to declare which arose during the creation of this article.

\bigskip
\noindent {\bf Declaration of generative AI and AI-assisted technologies in the manuscript preparation process:} During the preparation of this work the authors used OpenAI (ChatGPT-5.5) and Microsoft Copilot (GPT-5 reasoning model) in order to debug and optimise code, improve the visualisation and format of figures, and revise the text. After using this tools, the authors reviewed and edited the content as needed and take full responsibility for the content of the published article.

\bigskip
\noindent {\bf Data availability:} The code used to generate the figures in this paper are publicly available on Zenodo \citep{CM:2026}.

\bibliographystyle{plainnat}
\bibliography{parasites}

\end{document}